\newtheorem{theorem}{Theorem}[section]
\newtheorem{lemma}[theorem]{Lemma}
\newtheorem{definition}[theorem]{Definition}
\newtheorem{remark}[theorem]{Remark}
\newtheorem{assumption}[theorem]{Assumption}
\def\section{\@startsection {section}{1}{\z@}{3.25ex plus 1ex minus
		.2ex}{1.5ex plus .2ex}{\large\bf}}
\def\subsection{\@startsection{subsection}{2}{\z@}{3.25ex plus 1ex minus
		.2ex}{1.5ex plus .2ex}{\normalsize\bf}}
\newsavebox{\@brx}
\newcommand{\llangle}[1][]{\savebox{\@brx}{\(\m@th{#1\langle}\)}%
	\mathopen{\copy\@brx\kern-0.5\wd\@brx\usebox{\@brx}}}
\newcommand{\rrangle}[1][]{\savebox{\@brx}{\(\m@th{#1\rangle}\)}%
	\mathclose{\copy\@brx\kern-0.5\wd\@brx\usebox{\@brx}}}
\title{Wong–Zakai approximations for quasilinear systems of
	It\^o’s type stochastic differential equations driven by fBm with $H>\frac{1}{2}$}
\author{ Ramiro Scorolli\thanks{Dipartimento di Scienze Statistiche Paolo Fortunati, Università di Bologna, Bologna, Italy. \textbf{e-mail}: ramiro.scorolli2@unibo.it}}
\date{\today}
\begin{document}
	
	\maketitle
	
	\bigskip

	\begin{abstract}
		In a recent article  Lanconelli and Scorolli (2021) extended to the multidimensional case a Wong-Zakai-type approximation for It\^o stochastic differential equations proposed by \O ksendal and Hu (1996). The aim of the current paper is to extend the latter result to system of stochastic differential equations of It\^o type driven by fractional Brownian motion like those considered by Hu (2018).
		
		The covariance structure of the fractional Brownian motion (fBm) precludes us from using the same approach as that used by Lanconelli and Scorolli and instead we employ a \textit{truncated} Cameron-Martin expansion as the approximation for the fBm.
		We are naturally led to the investigation of
		a semilinear hyperbolic system of evolution equations in several space variables that we utilize for constructing a solution
		of the Wong–Zakai approximated systems.
		We show that the law of each element of the approximating
		sequence solves in the sense of distribution a Fokker–Planck equation and that the sequence converges
		to the solution of the Itô equation, as the number of terms in the expansion goes to infinite.
	\end{abstract}
	
	Key words and phrases: Wong Zakai approximation,fractional Brownian motion, Wick product, Fokker Planck, stochastic differential equations. \\
	
	AMS 2000 classification: 60H10; 60H30; 60H05.
	
	\bigskip	
	
\section{Introduction and statements of the main results.}	

Let $T>0$ be some arbitrary real number, $\{W_t\}_{t\in[0,T]}$ be a standard one dimensional Brownian motion and for $\epsilon>0$ denote with $\{W_t^{\epsilon}\}_{t\in[0,T]}$  some smooth stochastic process converging to the latter as $\epsilon\to 0$.

Under suitable conditions on the coefficients $b:[0,T]\times\mathbb R\to\mathbb R$ and $\sigma:[0,T]\times\mathbb R\to\mathbb R$ the solution $\{Y^{\epsilon}(t)\}_{t\in [0,T]}$ of the random ordinary differential equation
\begin{align}\label{eq: WZ strat}
\frac{d Y^{\epsilon}(t)}{dt}=b(t,Y^{\epsilon}(t))+\sigma(t,Y^{\epsilon}(t))\cdot\frac{d W^{\epsilon}(t)}{dt}
\end{align}
converges as $\epsilon\to 0$ to the strong solution of the Stratonovich stochastic differential equation (SDE for short)	
\begin{align}\label{eq: SDE strat}
d Y(t)=b(t,Y(t))dt+\sigma(t,Y(t))\circ dW_t,
\end{align}
or equivalently of the It\^o SDE
\begin{align*}
	d Y(t)=\left[b(t,Y(t))+\frac{1}{2}\sigma(t,Y(t))\partial_y\sigma(t,Y(t))\right]dt+\sigma(t,Y(t))dW_t.
\end{align*}
This is the statement of the famous Wong-Zakai theorem \cite{wong1965convergence} whose multidimensional version can be found in \cite{stroock2020support}.

In \cite{hu1996wick} \O ksendal and Hu suggested how to modify equation \eqref{eq: WZ strat} to get in the limit the It\^o's interpretation of \eqref{eq: SDE strat}; they considered the case with $\sigma(t,x)=\sigma(t)x$, where $\sigma:[0,T]\to\mathbb{R}$ is a deterministic function, and proved that the solution $\{X^{\pi}(t)\}_{t\in [0,T]}$ of the differential equation  
\begin{align*}
	\frac{dX^{\epsilon}(t)}{dt}=b(t,X^{\epsilon}(t))+\sigma(t)X^{\epsilon}(t)\diamond\frac{d{W}^{\epsilon}(t)}{dt},
\end{align*}
converges, as $\epsilon$ tends to zero, to the strong solution $\{X(t)\}_{t\in [0,T]}$ of the It\^o SDE
\begin{align}\label{ito 1D}
	dX(t)=b(t,X(t))dt+\sigma(t)X(t)dW(t).
\end{align}
Here, the symbol $X^{\epsilon}(t)\diamond\frac{d{W}^{\epsilon}(t)}{dt}$ stands for the \textit{Wick product} between $X^{\pi}(t)$ and $\frac{d{W}^{\epsilon}(t)}{dt}$. (We postpone to the next section all the necessary mathematical details for the tools utilized in this introduction).
In \cite{LanconelliScorolli2021} Lanconelli and Scorolli presented a multidimensional version of this result in which the linear diffusion matrix diagonal and the $d$-dimensional Brownian motion
is approximated by a piecewise interpolation also known as \textit{polygonal approximation}; just as in \cite{hu1996wick} the product between diffusion coefficients and smoothed white noise is interpreted as a \textit{Wick product}.

The aim of this article is to extend the aforementioned result to the case of a quasilinear system of SDEs driven by a fractional Brownian motion (fBm for short) with Hurst paramenter $H>1/2$ as those considered by Hu in \cite{hu2018ito}.
In order to do that we introduce the following Cauchy problem 
\begin{align}\label{eq: WZ}
\begin{cases}
\frac{d X^K_i(t)}{dt}=b_i(t,X^K(t))+\sigma_i(t)X_i^K(t)\diamond\frac{d B_i^K(t)}{dt},\; t\in ]0,T]\\
X_i(0)=c_i\in\mathbb R,\quad \text{for }\; i\in\{1,...,d\},
\end{cases}
\end{align}
where $\{B^K(t)\}_{t\in [0,T]}$ stands for the\textit{ truncated} (up to the $K$-th term) Cameron-Martin expansion of the $d$-dimensional fractional Brownian motion $\{B(t)\}_{t\in[0,T]}$(the reason for which we have to employ a different approximation from that used in \cite{LanconelliScorolli2021} will be addressed latter on), the coefficients $b:[0,T]\times \mathbb R^d\to\mathbb R^d$, $\sigma:[0,T]\to\mathbb R^d$ satisfy certain condition which will be specified in the following, while $c\in\mathbb R^d$ is a deterministic initial condition.
In order to ease the notation we have omitted (and will do so for the rest of the article) the Hurst parameter $H>1/2$. 

We must interpret \eqref{eq: WZ} as Wong-Zakai approximation of the stochastic Cauchy problem of the It\^o type:
\begin{align}\label{eq:SDE}
	\begin{cases}
		dX_i(t)=b_i(t,X(t))dt+\sigma_i(t)X_i(t)dB_i(t),\; t\in [0,T]\\
		X_i(0)=c_i\in\mathbb R,\; \text{for}\; i\in\{1,...,d\}.
	\end{cases}
\end{align}

It's important to stress the  fact that \eqref{eq: WZ} is not a system of random ordinary differential equations, but rather an evolution equation involving an infinite dimensional gradient (see for instance  \cite[equation 1.5]{hu2018ito}).

All throughout this article we will assume that the coefficients $b$ and $\sigma$ posses enough regularity to ensure that the Cauchy problem \eqref{eq:SDE} has a unique strong solution (e.g. \cite{hu2018ito}).
\begin{assumption}\label{assumptions}\quad
	\begin{itemize}
		\item The functions $b(t,x)$, $\partial_{x_1}b(t,x)$,..., $\partial_{x_d}b(t,x)$ are bounded and continuous; 
		\item the functions $\sigma_1(t),...,\sigma_d(t)$ are bounded and continuous.
	\end{itemize}
\end{assumption}

We are now ready to state our main results; the first of which ensures the existence of a \textit{solution} for the approximating equation.
\begin{theorem} [Existence]\label{th:existence}
Let Assumption \ref{assumptions} be in force. Then \eqref{eq: WZ} has a mild solution in the sense of definition \ref{def: solution}.
\end{theorem}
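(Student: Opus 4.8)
The plan is to reduce the Wick‑type evolution equation \eqref{eq: WZ} to a deterministic, finite–dimensional first‑order semilinear hyperbolic system, to solve that system by the method of characteristics together with a fixed‑point argument, and finally to translate the result back into a mild solution of \eqref{eq: WZ} in the sense of Definition~\ref{def: solution}. For the reduction, recall that only the first $K$ terms of the Cameron–Martin expansion enter, so we may write $B_i^K(t)=\sum_{k=1}^K\xi_{i,k}\,\varepsilon_k(t)$, where $(\xi_{i,k})_{i\le d,\,k\le K}$ is a finite family of independent standard Gaussian variables and the $\varepsilon_k$ are the deterministic, smooth basis functions, whence $\dot B_i^K(t)=\sum_{k=1}^K\xi_{i,k}\dot\varepsilon_k(t)$. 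Looking for a solution of the form $X_i^K(t)=u_i(t,\xi)$ with $\xi\in\mathbb R^{dK}$ and using the elementary identity $F\diamond\xi_{i,k}=\xi_{i,k}F-\partial_{\xi_{i,k}}F$ (valid on a dense class of $F$ and then extended), one gets $X_i^K(t)\diamond\dot B_i^K(t)=\sum_{k=1}^K\dot\varepsilon_k(t)\big(\xi_{i,k}u_i-\partial_{\xi_{i,k}}u_i\big)$, so that \eqref{eq: WZ} becomes the Cauchy problem
\[
\partial_t u_i+\sigma_i(t)\sum_{k=1}^K\dot\varepsilon_k(t)\,\partial_{\xi_{i,k}}u_i \;=\; b_i(t,u)+\sigma_i(t)\Big(\sum_{k=1}^K\dot\varepsilon_k(t)\,\xi_{i,k}\Big)u_i,\qquad u_i(0,\cdot)=c_i ,
\]
for $i=1,\dots,d$: a \emph{semilinear hyperbolic system} in the $dK$ space variables $\xi$, whose zeroth‑order term is nonlinear only through the bounded drift $b$. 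One direction of this equivalence — that a sufficiently regular solution $u$ of the system, composed with the Gaussian coordinates, is a mild solution of \eqref{eq: WZ} — must be verified by hand, in particular the integrability that makes the Wick products meaningful.

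\emph{Solving the system.} The principal‑part coefficients $\sigma_i(t)\dot\varepsilon_k(t)$ depend on $t$ only, so the characteristics of the $i$‑th equation are explicit: through $\eta$ at time $0$ one has $\xi_{i,k}(s)=\eta_{i,k}+\int_0^s\sigma_i(r)\dot\varepsilon_k(r)\,dr$, with the remaining blocks frozen, and characteristics within a family never cross, so the transport part is globally well posed. Along such a characteristic $u_i$ satisfies the scalar linear ODE $\tfrac{d}{ds}u_i=b_i(s,u)+\big(\sum_k\sigma_i(s)\dot\varepsilon_k(s)\xi_{i,k}(s)\big)u_i$, whose Duhamel representation carries the integrating factor $\exp\!\big(\int_0^s\sum_k\sigma_i(r)\dot\varepsilon_k(r)\xi_{i,k}(r)\,dr\big)$; since $\xi_{i,k}(r)$ is affine in $\eta$, this factor is of the form $\exp(\text{affine in }\eta)$, i.e. a Cameron–Martin–type exponential weight, which is square‑integrable against the Gaussian law. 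Assembling the $d$ Duhamel formulas yields a closed system of integral equations for $(u_1,\dots,u_d)$; by Assumption~\ref{assumptions} the maps $x\mapsto b_i(t,x)$ are globally Lipschitz with bounded values, so a Banach fixed‑point argument in $C([0,T];E)$ for a suitably weighted space $E$ (or a Picard iteration controlled by Gronwall's inequality) produces a unique global solution, together with the continuity in $t$ and the smoothness and growth control in $\xi$ needed to justify the manipulations above.

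\emph{Main obstacle.} The delicate point is the unboundedness in $\xi$ of the zeroth‑order coefficient $\sigma_i(t)\sum_k\dot\varepsilon_k(t)\xi_{i,k}$ multiplying $u_i$: one must choose a functional framework — a weighted supremum norm, or a weighted $L^2$ over the Gaussian measure — that is at once invariant under the linear flow, compatible with the Lipschitz estimate for the bounded drift, and strong enough to guarantee that $X_i^K(t)=u_i(t,\xi)$ lies in the space required by Definition~\ref{def: solution} and that $X_i^K(t)\diamond\dot B_i^K(t)$ is well defined. The explicit exponential weight coming from the characteristics is precisely what makes such a choice possible; tracking its constants through the fixed‑point argument, and checking carefully the equivalence between \eqref{eq: WZ} and the hyperbolic system at the level of Definition~\ref{def: solution}, is where the real work lies.
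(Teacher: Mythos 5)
Your proposal follows essentially the same route as the paper: the same reduction of \eqref{eq: WZ} to the semilinear hyperbolic system \eqref{eq:PDEu} in the $dK$ Gaussian coordinates via the Wick--Malliavin identity, the same explicit characteristics yielding the exponential weight and the mild formula \eqref{eq:mild uu}, and the same identification of the unbounded zeroth-order coefficient as the main obstacle. The paper handles that obstacle by conjugating with the Gaussian weight $e^{-\frac{1}{2}\|\mathbf{x}\|_F^2}$ (the auxiliary problem \eqref{eq:PDEv}), which is just the substitution form of your weighted-norm fixed point, and then verifies the three conditions of Definition \ref{def: solution} exactly along the lines you sketch.
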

	
Our second theorem states that the law of the \textit{approximation} solves a Fokker-Planck-like equation 
\begin{theorem} [Fokker-Planck equation]\label{th:fokker planck}
The law
\begin{align*}
\mu^{K}(t,A):=\mathbb P(\{\omega\in\Omega:X^K(t,\omega)\in A\}), \quad t\in [0,T], A\in\mathcal B(\mathbb R^d)
\end{align*}
of the random vector $X^{K}(t)$ solves in the sense of distributions the Fokker-Planck equation
\begin{align}
\left(\partial_t+\sum_{i,j=1}^d\sum_{k=1}^K\sigma_i(t)\xi_k(t)x_ig_{ik}^{(j)}(t,x)\partial^2_{x_ix_j}+\sum_{i=1}b_i(t,x)\partial_{x_i}\right)^*u(t,x)=0,\quad (t,x)\in[0,T]\times\mathbb R^d
\end{align}
where $g_{ik}^{(j)}:[0,T]\times\mathbb R^d\to\mathbb R$ is a measurable function defined in \eqref{eq:g}.
\end{theorem}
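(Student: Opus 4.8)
The plan is to derive the Fokker--Planck equation by computing, for a smooth compactly supported test function $\varphi\in C_c^\infty(\mathbb R^d)$, the time evolution of $\mathbb E[\varphi(X^K(t))]$ and showing that it equals $\int_{\mathbb R^d}\varphi(x)\,(\partial_t\mu^K)(t,dx)$ weakly, so that by definition $\mu^K$ solves the adjoint equation. The starting point is the mild-solution representation for $X^K$ coming from Theorem~\ref{th:existence} together with the explicit form of the truncated Cameron--Martin approximation $B^K_i(t)=\sum_{k=1}^K\xi_k\int_0^t\dot e_k(s)\,ds$ (or whatever normalization the paper fixes in \eqref{eq:g}), where the $\xi_k$ are i.i.d.\ standard Gaussians. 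Because the Wick product with $\frac{dB^K_i}{dt}$ in \eqref{eq: WZ} translates, after using the Wick calculus / Gaussian integration-by-parts (Malliavin-type) identities, into a combination of an ordinary transport term and a genuine second-order term, I would first establish the pathwise identity expressing $X^K_i(t)$ and in particular the Wick product $\sigma_i(t)X^K_i(t)\diamond\dot B^K_i(t)$ in terms of the Malliavin derivatives $D_{\xi_k}X^K_i(t)=:g^{(j)}_{ik}$-type quantities. This is exactly where the function $g^{(j)}_{ik}$ defined in \eqref{eq:g} enters.

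Concretely, I would apply the Itô-type chain rule to $t\mapsto\varphi(X^K(t))$ along the (random but, for fixed $\omega$, smooth) trajectory solving \eqref{eq: WZ}, giving
\begin{align*}
\frac{d}{dt}\varphi(X^K(t))=\sum_{i=1}^d\partial_{x_i}\varphi(X^K(t))\Big(b_i(t,X^K(t))+\sigma_i(t)X^K_i(t)\diamond\dot B^K_i(t)\Big).
\end{align*}
Taking expectations, the drift term immediately produces $\mathbb E[\sum_i b_i\,\partial_{x_i}\varphi]$. For the Wick term I would use the fundamental relation $\mathbb E[F\diamond\dot B^K_i(t)]$-style identities: by definition of the Wick product against the Gaussian family $(\xi_k)_{k\le K}$ and Gaussian integration by parts, $\mathbb E[\partial_{x_i}\varphi(X^K(t))\,\sigma_i(t)X^K_i(t)\diamond\dot B^K_i(t)]$ becomes $\sum_{k=1}^K\sigma_i(t)\xi_k(t)\,\mathbb E[D_k(\partial_{x_i}\varphi(X^K(t))X^K_i(t))]$ — schematically — and the inner Malliavin derivative, expanded by the chain rule, yields $\sum_j\partial^2_{x_ix_j}\varphi(X^K(t))\cdot x_i g^{(j)}_{ik}(t,X^K(t))$ plus a first-order piece that is absorbed into (or cancels within) the drift, matching precisely the operator in the statement. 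Integrating over $[0,t]$ and rewriting $\mathbb E[\psi(X^K(t))]=\int\psi\,d\mu^K(t,\cdot)$ gives the weak/distributional form $\int\varphi\,d\mu^K(t)-\int\varphi\,d\mu^K(0)=\int_0^t\!\!\int (\mathcal L_s\varphi)\,d\mu^K(s)\,ds$ with $\mathcal L$ the stated generator, which is exactly the assertion that $\mu^K$ solves $\mathcal L^*\mu^K=0$ in the sense of distributions on $[0,T]\times\mathbb R^d$.

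The main obstacle I anticipate is making the manipulation of the Wick product rigorous: $\dot B^K_i(t)$ and the Wick product are not classical pointwise objects, so the chain-rule computation above has to be justified either in a suitable Hida-type distribution space or, more cleanly, by first passing to the $S$-transform (or using that for the truncated expansion everything is genuinely finite-dimensional in the Gaussian variables $\xi_1,\dots,\xi_K$, reducing the Wick product to $X^K_i\diamond\xi_k$-type expressions that are ordinary products minus lower-order correction terms). One must also verify the integrability needed to interchange $\frac{d}{dt}$, $\mathbb E$, and $\int_0^t$, and to differentiate under the expectation — this follows from Assumption~\ref{assumptions} (boundedness of $b$ and its gradient, boundedness of $\sigma$) together with moment bounds on the mild solution $X^K$ established in the proof of Theorem~\ref{th:existence}, in particular bounds on the auxiliary functions $g^{(j)}_{ik}$. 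A secondary technical point is checking that the first-order terms generated by the Malliavin derivative combine correctly so that no extra drift contribution survives beyond $\sum_i b_i\partial_{x_i}$; I expect this to be a bookkeeping exercise once the representation \eqref{eq:g} is in hand.
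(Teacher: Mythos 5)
Your plan follows essentially the same route as the paper: exploit the finite-dimensionality of the truncated expansion in the Gaussian variables $Z_k^{(i)}$ to turn the Wick term into an ordinary product minus a directional derivative, apply the chain rule to $\varphi(X^K(t))=\varphi(u(t,\mathbf{z}))$, and use Gaussian integration by parts so that the first-order correction cancels exactly and only the second-order term with coefficient $x_i\,\partial_{x_{ki}}u_j(t,\mathbf{z})$ survives. The one step you leave implicit is the one that makes this coefficient a deterministic function of $(t,x)$: since the derivative $\partial_{x_{ki}}u_j(t,\mathbf{z})$ (your $D_kX^K_j$) is in general not measurable with respect to $X^K(t)$, the paper must define $g_{ik}^{(j)}$ in \eqref{eq:g} as a conditional expectation via the tower property and Doob's lemma, and without that projection the equation would not close as a PDE for the law $\mu^K$ alone.
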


The third and last theorem states that the \textit{approximation} indeed converges to the strong solution of the It\^o SDE;
\begin{theorem}[Convergence]\label{main theorem 3}\label{th: convergence}
	The mild solution $\{X^{K}(t)\}_{t\in [0,T]}$ converges as  $K$ tends to infinite, to the unique strong solution $\{X(t)\}_{\in [0,T]}$ of the It\^o SDE \eqref{eq:SDE}. More precisely,
	\begin{align*}
		\lim_{K\to\infty}\sum_{i=1}^d\mathbb{E}\left[\left|X_i^{K}(t)-X_i(t)\right|\right]=0,\quad\text{ for all }t\in [0,T].
	\end{align*}
\end{theorem}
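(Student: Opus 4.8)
The plan is to express both $X^{K}(t)$ and the target $X(t)$ through the same variation-of-constants (Wick--Duhamel) formula — one with the truncated process $B^{K}$, one with $B$ — and then to run a Gronwall argument on $\Delta^{K}(t):=\sum_{i=1}^{d}\mathbb{E}\,|X_i^{K}(t)-X_i(t)|$.

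\emph{Representations.} From the construction behind Theorem~\ref{th:existence} the mild solution satisfies, for each $i$,
\begin{equation*}
X_i^{K}(t)=c_i\,\mathcal{E}_i^{K}(t)+\int_{0}^{t}\mathcal{E}_i^{K}(t,s)\diamond b_i\bigl(s,X^{K}(s)\bigr)\,ds,\qquad \mathcal{E}_i^{K}(t,s):=\exp^{\diamond}\!\Bigl(\int_{s}^{t}\sigma_i(u)\,dB_i^{K}(u)\Bigr),
\end{equation*}
with $\mathcal{E}_i^{K}(t):=\mathcal{E}_i^{K}(t,0)$ and $\int_{s}^{t}\sigma_i\,dB_i^{K}$ a Gaussian random variable (a finite combination of the i.i.d.\ standard normals entering the truncated Cameron--Martin expansion). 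Applying the Wick--It\^o product rule to $\exp^{\diamond}\!\bigl(-\int_{0}^{t}\sigma_i\,dB_i\bigr)\diamond X_i(t)$, as in the classical linear-noise case, the unique strong solution of \eqref{eq:SDE} obeys the analogous identity (the formal $K=\infty$ case)
\begin{equation*}
X_i(t)=c_i\,\mathcal{E}_i(t)+\int_{0}^{t}\mathcal{E}_i(t,s)\diamond b_i\bigl(s,X(s)\bigr)\,ds,\qquad \mathcal{E}_i(t,s):=\exp^{\diamond}\!\Bigl(\int_{s}^{t}\sigma_i(u)\,dB_i(u)\Bigr).
\end{equation*}

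\emph{Ingredients.} (a) As $\mathbf 1_{[0,T]}\in\mathcal H$ and the $\sigma_i$ are bounded for $H>1/2$, one has $\int_{s}^{t}\sigma_i\,dB_i^{K}\to\int_{s}^{t}\sigma_i\,dB_i$ in $L^{2}(\Omega)$ with convergent variances; since a Wick exponential of a Gaussian is an ordinary exponential, hypercontractivity gives $\mathcal{E}_i^{K}(t,s)\to\mathcal{E}_i(t,s)$ in every $L^{p}(\Omega)$ and $\sup_{K}\sup_{0\le s\le t\le T}\|\mathcal{E}_i^{K}(t,s)\|_{L^{p}(\Omega)}<\infty$. (b) For the Wick exponential $\mathcal{E}(h)$ of a first-chaos element, $F\diamond\mathcal{E}(h)=\mathcal{E}(h)\,(\mathcal{T}_{-h}F)$, where $\mathcal{T}_{-h}$ is the Cameron--Martin shift; in particular $\mathbb{E}[F\diamond\mathcal{E}(h)]=\mathbb{E}[F]$. (c) The Cameron--Martin vectors $h^{K}_{t,s}$ attached to $\mathcal{E}_i^{K}(t,s)$ converge in $\mathcal H$ to the vector $h_{t,s}$ attached to $\mathcal{E}_i(t,s)$.

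\emph{Gronwall.} Subtract the two representations and, inside the integral, add and subtract $b_i(s,X(s))\diamond\mathcal{E}_i^{K}(t,s)$, so that the integrand splits as $\bigl[b_i(s,X^{K}(s))-b_i(s,X(s))\bigr]\diamond\mathcal{E}_i^{K}(t,s)+b_i(s,X(s))\diamond\bigl[\mathcal{E}_i^{K}(t,s)-\mathcal{E}_i(t,s)\bigr]$. By (b), the $L^{1}(\Omega)$ norm of the first term is exactly $\mathbb{E}|b_i(s,X^{K}(s))-b_i(s,X(s))|\le\|\nabla b\|_{\infty}\sum_{j}\mathbb{E}|X_j^{K}(s)-X_j(s)|$, with no moment estimate needed. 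The second term equals $\mathcal{E}_i^{K}(t,s)\,\mathcal{T}_{-h^{K}}b_i(s,X(s))-\mathcal{E}_i(t,s)\,\mathcal{T}_{-h}b_i(s,X(s))$; using $|b_i|\le\|b\|_{\infty}$, Cauchy--Schwarz, the uniform $L^{p}$ bounds of (a), the continuity of $b$, and the convergence $X(s)(\cdot+h^{K})\to X(s)(\cdot+h)$ in probability (from (c) and quasi-invariance of the Gaussian law), its $L^{1}(\Omega)$ norm is bounded by a quantity $r_i^{K}(t,s)$ that is uniformly bounded and tends to $0$ for each $s$. Combined with $\mathbb{E}|\mathcal{E}_i^{K}(t)-\mathcal{E}_i(t)|\to0$, this yields $\Delta^{K}(t)\le\varepsilon_{K}(t)+d\|\nabla b\|_{\infty}\int_{0}^{t}\Delta^{K}(s)\,ds$ with $\varepsilon_{K}(t)\to0$ (dominated convergence in the $s$-integral), and Gronwall's lemma closes the argument, giving in fact $\sup_{t\le T}\Delta^{K}(t)\to0$.

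\emph{Main obstacle.} The two real difficulties are: (i) proving rigorously, in this Wick/infinite-dimensional framework, that the strong solution of \eqref{eq:SDE} does satisfy the Wick--Duhamel identity above — i.e.\ that the Wick--It\^o product rule applies to the coefficients at hand — which is cleanest as a separate lemma; and (ii) the mixed term $b_i(s,X(s))\diamond(\mathcal{E}_i^{K}-\mathcal{E}_i)$, where one passes to the limit inside the only a.s.-defined functional $X(s)$ along $h^{K}_{t,s}\to h_{t,s}$, using the quasi-invariance of the Gaussian law together with the uniform exponential moments of $\mathcal{E}_i^{K}(t,s)$. The nonlinear drift, in contrast, costs almost nothing thanks to $\mathbb{E}[F\diamond\mathcal{E}(h)]=\mathbb{E}[F]$, which is exactly what allows an $L^{1}$ rather than $L^{p}$ Gronwall estimate.
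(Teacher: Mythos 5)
Your proposal is correct and follows essentially the same route as the paper: the Wick--Duhamel representations for both $X^K$ and $X$, the add-and-subtract splitting inside the integral, the identity $\mathbb{E}[F\diamond\mathcal{E}(h)]=\mathbb{E}[F]$ to close an $L^1$ Gronwall estimate, and the treatment of the mixed term via Gjessing's formula together with the $L^p$-continuity of the Cameron--Martin translation along $h^K\to h$ (which the paper isolates as Lemma \ref{lemma}, proved by polynomial approximation and fractional Girsanov --- the same quasi-invariance ingredient you invoke). The two obstacles you flag are precisely the points the paper spends its effort on.
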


\section{Preliminaries}
\subsection{Elements on fractional Brownian motion.}
In this section we will introduce the basic concepts that will be needed in order to prove our results. For further details the interested reader is referred to the excellent references \cite{hu2016analysis}\cite{biagini2008stochastic}\cite{duncan2000stochastic}.

Start by fixing $H\in\left(\frac{1}{2},1\right)$, and let $\Omega:=C_0([0,T],\mathbb R^d)$ be the space of $\mathbb R^d$-valued continuous functions endowed with the topology of uniform convergence. There is a probability measure $P^H$ on $(\Omega,\mathcal B(\Omega))$, such that on $(\Omega,\mathcal B(\Omega),P^H)$ the coordinate process $B:\Omega\to\mathbb R^d$ defined as
\begin{align*}
B(t,\omega)=\omega(t),\quad \omega\in\Omega
\end{align*}
is a $d$-dimensional fBm, i.e. a $d$-dimensional centered Gaussian stochastic process in which for each $i\in \{1,...,d\}$ it holds that
\begin{align*}
\mathbb E\left[B_i(t)B_i(s)\right]=\frac{1}{2}\left(t^{2H}+s^{2H}- |t-s|^{2H}\right),\;s,t\in[0,T].
\end{align*}

Let 
\begin{align}\label{eq:phi}
\phi(s,t):=H(2H-1)|s-t|^{2H-2},\;s,t\in[0,T],
\end{align}
and define
\begin{align*}
\mathcal H_{\phi}:=\bigg\{f:[0,T]\to\mathbb R: |f|_{\phi}^2=\int_0^T\int_0^T f(s)f(t)\phi(s,t)dsdt<\infty\bigg\}.
\end{align*}
If $\mathcal H_{\phi}$ is equipped with the inner product 
\begin{align*}
\langle f,g\rangle_{\phi}=\int_0^T\int_0^T f(s)g(t)\phi(s,t)dsdt,
\end{align*}
then it becomes a separable Hilbert space, moreover we can see that $\mathcal H_{\phi}$ equals the closure of $L^2([0,T])$ with respect to the inner product $\langle\cdot,\cdot\rangle_{\phi}$. 
For $f\in\mathcal H_{\phi}$ we denote with $\Phi[f]:[0,T]\to\mathbb R$ the following continuous map
\begin{align*}
&[0,T]\ni t\to \mathbb R,\\
&t\mapsto \int_0^T f(s)\phi(t,s)ds.
\end{align*}

For a deterministic function $f\in\mathcal H_{\phi}$ we can define in the usual manner a \textit{fractional} Wiener integral satisfying the following isometry property 
\begin{align}\label{eq:isometry}
\mathbb E\left[\left(\int_0^T f(s)dB_i(s)\right)^2 \right]=|f|_{\phi}^2.
\end{align}
For $f\in\mathcal H_{\phi}$ and $i\in\{1,...,d\}$ define the \textit{stochastic exponential of }$f$ by
\begin{align*}
\mathcal E_i(f):=\exp\bigg\{\int_0^Tf(s)B_i(s)-\frac{1}{2}|f|_{\phi}^2\bigg\}.
\end{align*}
then the linear span of $\big\{\mathcal E_i(f); f\in\mathcal H_{\phi},\; i\in \{1,...,d\}\big\}$ is dense in $\mathbb L^2(\Omega)$.
\subsection{Approximating equation.}
The first step when constructing a Wong-Zakai approximation is to choose a sequence of smooth stochastic processes converging to the Brownian motion that drives the original equation. 
\begin{remark}
In \cite{LanconelliScorolli2021} we have employed the so called \textit{polygonal} approximation; however the non-independence of the increments of the fBm precludes us from using this approach (see \cite[Remark 3.2]{LanconelliScorolli2021}). 
\end{remark}

 Instead let's  assume that $\{e_k\}_{k\geq 1}$ is a complete orthonormal system (CONS) of the Hilbert space $\mathcal H_{\phi}$, then (e.g. \cite[equation 3.21]{hu2003fractional} ) the $d$-dimensional fractional Brownian motion has the following Cameron-Martin expansion
\begin{align*}
B_i(t)=\sum_{k=1}^{\infty}\left[\int_0^t\left(\int_0^T e_k(r)\phi(v,r)dr\right)dv\right]\int_0^Te_k(s)dB_i(s),\,t\in[0,T],\;\text{for }i\in\{1,...,d\}.
\end{align*}
From this expression it's then straightforward to see that a natural approximation for the fractional \textit{white noise} is given by
	\begin{align*}
		\frac{d B_i^K(t)}{dt}:=\sum_{k=1}^{K}\left(\int_0^T e_k(r)\phi(t,r)dr\right)\int_0^Te_k(s)dB_i(s), \,t\in[0,T],\;\text{for }i\in\{1,...,d\};
	\end{align*}
i.e. the time derivative of the \textit{ truncated} Cameron-Martin expansion. The convergence of this object to the ``singular fractional white noise'' as $K$ goes to infinity must be understood in a space of generalized random variables (see \cite{hu2003fractional} for further details).
	
Notice that due to \eqref{eq:isometry}  and the orthonormality of $\{e_k\}_{k\geq 1}$ if we let $Z_k^{(i)}:=\int_0^Te_k(s)dB_i^H(s)$, then $\left(Z_k^{(i)}\right)_{(k,i)\in\{1,...,K\}\times\{1,...,d\}}$  is a family of i.i.d. standard Gaussian random variables.
	
For the ease of notation let $\xi_k(\cdot)=\Phi [e_k](\cdot)$ for all $k\in\{1,...,K\}$ 
and hence our approximation for  the fractional white noise can be written as
\begin{align*}
		\frac{d B_i^K(t)}{dt}=\sum_{k=1}^{K}\xi_k(t)Z_{k}^{(i)},\,t\in[0,T],\;\text{for }i\in\{1,...,d\}.
\end{align*}

One of the key tools that will be employed in this article is the so called \textit{Wick product} which can be defined for any couple of random variables $X$ and $Y$ belonging to $\mathbb L^p(\Omega)$ for some $p>1$, (e.g. \cite{holden1996stochastic},\cite{hu2016analysis},\cite{janson1997gaussian}). For our purposes it's enough to consider just a few particular cases:

\begin{itemize}
\item if $X\in\mathbb L^p(\Omega)$ for some $p>1$ and $f\in\mathcal H_{\phi}$ we set 
	\begin{align}\label{eq:gjessing}
		X\diamond \mathcal E_i(f):=\mathtt T_{-\Phi[f]_i}X\cdot  \mathcal E_i(f)
	\end{align}
	where $\mathtt T_{-\Phi [f]_i}$ stands for the \textit{translation} operator 
	\begin{align}
		(\mathtt T_{-\Phi [f]_i} X)(\omega):=X\left(\omega-\epsilon_i\int_0^{\cdot}\Phi[f](r)dr\right).
	\end{align}
	Here $\{\epsilon_1,...,\epsilon_d\}$ denotes the canonical basis of $\mathbb R^d$. This is the fractional analog of the so called Gjessing lemma (\cite{holden1996stochastic} Theorem $2.10.6$ ).
	From the latter we are able to see that the \textit{Wick product} with a \textit{stochastic exponential} preserves the monotonicity, i.e. :
	\begin{align*}
\mbox{if $X\leq Y$ then $X\diamond \mathcal E_i(f)\leq Y\diamond \mathcal E_i(f)$}.
	\end{align*}
\item A consequence of the latter and the density of the \textit{stochastic exponentials }is that if $g\in \mathcal H_{\phi}$, $F\in \mathbb L^p(\Omega)$ and $\langle  D_{\phi}^{(i)}F,g\rangle\in\mathbb L^p(\Omega)$ for some $p>1$ then 
\begin{align}\label{eq:wick malliavin}
F\diamond \int_0^T g(s)dB_i(s)=F\int_0^T g(s)dB_i(s)-\langle  D_{\phi}^{(i)}F,g\rangle
\end{align}
where $D_{\phi}^{(i)}$ denotes the $\phi$-derivative (e.g. \cite{hu2003fractional}\cite{duncan2000stochastic}) with respect to the $i$-th fBm and $\langle \cdot,\cdot\rangle$ denotes the inner product of $\mathbb L^2([0,T])$
\end{itemize}

With all this in hand we are able to provide a solution concept for \eqref{eq: WZ}:

\begin{definition}\label{def: solution}
	A $d$-dimensional stochastic process $\{X^{K}(t)\}_{t\in [0,T]}$ is said to be a \emph{mild} solution of equation \eqref{eq: WZ} if:	
	\begin{enumerate}
		\item the function $t\mapsto X^{K}(t)$ is almost surely continuous;
		\item for all $i\in\{1,...,d\}$ and $t\in [0,T]$, the random variable $X_i^K(t)$ belongs to $\mathbb L^p(\Omega)$ for some $p>1$;
		\item for all $i\in\{1,...,d\}$, the identity
		\begin{align}\label{eq:solution}
			X_i^{K}(t)=c_i\mathcal E_i^{K}(0,t)+\int_0^t b_i(s,X^{K}(s))\diamond \mathcal E_i^{K}(s,t)ds,\quad t\in [0,T],
		\end{align}
		holds almost surely, where for any $t,r\in [0,T], r\leq t$, $\mathcal E_i^K(r,t)$ is a shorthand for $\mathcal E_i(\sigma_i^K(r,t))$ where $\sigma_i^K(r,t;\cdot)$ denotes the orthogonal projection of $\chi_{[r,t]}\sigma_i(\cdot)$ on $\operatorname{span}\{e_1,...,e_K\}\subset \mathcal H_{\phi}$.
		
	\end{enumerate}
\end{definition}

\section{Proof of theorem \ref{th:existence}}
In order to prove the existence of a mild solution for \eqref{eq: WZ} we will introduce a system of partial differential equations which is related to \eqref{eq: WZ} by the following heuristic considerations.
\begin{remark}
Formally applying identity \eqref{eq:wick malliavin} we can rewrite (\ref{eq: WZ}) as 	
\begin{align*}
	\begin{cases}
		\frac{d	X_i^K(t)}{d}=b_i(t,X^K(t))+\sigma_i(t)X_i(t)\cdot \left(\sum_{k=1}^{K}\xi_k(t)Z_{k}^{(i)}\right) -\sum_{k=1}^{K}\sigma_i(t)\xi_k(t)\left\langle D^{(i)}_{\phi}X_i(t),e_k\right\rangle,\\
		t\in ]0,T]\\
		X_i(0)=c_i\in\mathbb R,\; \text{for}\; i\in\{1,...,d\}.
	\end{cases}
\end{align*}
If we now search for a solution of the form 
\begin{align*}
	X_i^K(t,\omega)=u_i(t,\mathbf{z}(\omega)),
\end{align*}
for $u_i:[0,T]\times \mathbb R^{K\times d}\to\mathbb R$ where we identify $\mathbb R^{K\times d}$ with the space of $(K\times d)$ and $\mathbf{z}_{ki}(\omega)=Z_{k}^{(i)}(\omega)$ then by a simple application of the chain rule for the $\phi$-derivative we see that $u=(u_1,...,u_d)$ has to solve the following semilinear hyperbolic system of partial differential equations
\begin{align}\label{eq:PDEu}
	\begin{cases}
		\partial_t u_i=b_i(t,u)+\sigma_i(t)\sum_{k=1}^K\xi_k(t)\left[x_{ki}u_i-\partial_{x_{ki}}u_i\right],\\
		(t,\mathbf{x})\in]0,T]\times\mathbb R^{K\times d},\\
		u_i(0,\mathbf{x})=c_i\in\mathbb R,\; \text{for}\; i\in\{1,...,d\}.
	\end{cases}
\end{align}
\end{remark}
Unfortunately to the best of our knowledge the latter does not satisfy the basic assumption of the main existence-uniqueness theorems present in the literature.
For that reason we will introduce the following auxiliary Cauchy problem
\begin{align}\label{eq:PDEv}
	\begin{cases}
		\partial_t v_i=b_i\left(t,v(t)\exp\big\{\frac{1}{2}\|\mathbf{x}\|_F^2\}\right)\exp\big\{-\frac{1}{2}\|\mathbf{x}\|_F^2\}-\sigma_i(t)\sum_{k=1}^K\xi_k(t)\partial_{x_{ki}}v_i,\\
		 (t,\mathbf{x})\in]0,T]\times\mathbb R^{K\times d},\\
		v_i(0,\mathbf{x})=c_i\exp\big\{-\frac{1}{2}\|\mathbf{x}\|_F^2\},\; \text{for}\; i\in\{1,...,d\},
	\end{cases}
\end{align}
where $\|\cdot\|_F$ denotes the Frobenious norm, i.e. $\|\mathbf{x}\|_F^2:=\sum_{i=1}^d\sum_{k=1}^K|x_{ki}|^2$. Our motivation for doing so will be clear in a moment.

A closer inspection would allow the reader to see that the latter is a $d$-dimensional semilinear symmetric hyperbolic system of evolution equations in $(K\times d)$ spatial variables.
The validity of assumption \ref{assumptions} implies the existence of a unique classical solution of \eqref{eq:PDEv} for any arbitrary time interval $[0,T]$ (see for instance \cite{taylor2013partial},\cite{bressan2000hyperbolic} and \cite{majda2012compressible}).

If we let
\begin{align}
\Sigma_{i,k}(r,t):=\int_r^t\sigma_i(s)\xi_k(s)ds =\langle \chi_{[r,t]}\sigma_i,e_k\rangle_{\phi},
\end{align}
then we can write down a \textit{mild solution} for \eqref{eq:PDEv} as 
\begin{align}\label{eq:mild v}
\begin{cases}
&v_i(t,\mathbf{x})=c_i\exp\big\{-\frac{1}{2}\|\mathbf{x}-\mathbf\Sigma^{(i)}(t)\|_F^2\big\}\\
&\quad\quad+\int_0^t b_i\left(s,v(s,\mathbf{x}-\mathbf\Sigma^{(i)}(s,t))\exp\big\{\frac{1}{2}\|\mathbf{x}-\mathbf\Sigma^{(i)}(s,t)\|_F^2\big\}\right)\exp\big\{-\frac{1}{2}\|\mathbf{x}-\mathbf\Sigma^{(i)}(s,t)\|_F^2\big\}\;ds,\\
&\text{for}\; t\in[0,T],\mathbf{x}\in \mathbb R^{K\times d},\; i\in\{1,...,d\}
\end{cases}
\end{align}
where for any pair $r,t\in [0,T], t\geq r$, we denote with $\mathbf\Sigma^{(i)}(r,t)$ the $(K\times d)$-matrix where the $i$-th column is given by $[\Sigma_{i,1}(r,t),...,\Sigma_{i,K}(r,t)]^T$ and all the remaining components are equal $0$.

Now if we let $u_i(t,\mathbf x):=v_i(t,\mathbf x)\exp\big\{\frac{1}{2}\|\mathbf{x}\|_F^2\big\}$ for all $i\in\{1,...,d\}$ a simple application of the chain rule shows that $u$ solves \eqref{eq:PDEu}, furthermore using \eqref{eq:mild v} we have that the following mild representation holds
\begin{align}\label{eq:mild uu}
\begin{cases}
&u_i(t,\mathbf{x})=c_i\exp\bigg\{\sum_{k=1}^{K}\left[x_{ik}\Sigma_{i,k}(0,t)-\frac{1}{2}|\Sigma_{i,k}(0,t)|^2\right]\bigg\}\\
&\quad\quad\quad+\int_0^t b_i\big(s,u(s,\mathbf{x}-\mathbf\Sigma^{(i)}(s,t))\big)\exp\bigg\{\sum_{k=1}^{K}\left[x_{ik}\Sigma_{i,k}(s,t)-\frac{1}{2}|\Sigma_{i,k}(s,t)|^2\right]\bigg\}\;ds,\\
&\text{for}\; t\in[0,T],\mathbf{x}\in \mathbb R^{K\times d}, i\in\{1,...,d\}.
\end{cases}
\end{align}
\begin{remark}
This equation is the analog of \cite[equation 3.4]{LanconelliScorolli2021}, where intead of shifting the $i$-th component of the vector of spatial variables we shift the $i$-th column of the matrix of spatial variables.
\end{remark}
At this point we define the candidate solution $\{X^K(t)\}_{t\in[0,T]}$ as
\begin{align}\label{eq:solution def}
	X_i^K(t,\omega)=u_i(t,\mathbf{z}(\omega)),\; t\in [0,T],\omega\in \Omega,\; \text{for}\; i\in\{1,...,d\}
\end{align}
where again $\mathbf{z}(\omega)$ is the $K\times d$-matrix in which the $(k,i)$-th component is given by $Z_k^{(i)}(\omega)$.

Next we must verify that $\{X^K(t)\}_{t\in[0,T]}$ is indeed a mild solution of the system \eqref{eq: WZ}, i.e. that satisfies the conditions imposed by definition \ref{def: solution}.

The almost surely continuity of the path is given by the continuity of $[0,T]\ni t\mapsto u(t,\mathbf x):=v(t,\mathbf x)e^{\frac{\|\mathbf x\|_F^2}{2}}$ for all $\mathbf x\in\mathbb R^{K\times d}$ (remember that $v$ is a classical solution).

Noticing that  
\begin{align}\label{eq:wick exp WZ}
	&\exp\bigg\{\sum_{k=1}^{K}\left[x_{ik}\Sigma_{i,k}(r,t)-\frac{1}{2}|\Sigma_{i,k}(r,t)|^2\right]\bigg\}\bigg\rvert_{{x}_{ki}=Z_k^{(i)}(\omega)}\nonumber\\
	&=\exp\bigg\{\int_0^T\sum_{k=1}^K \left(\int_r^t\sigma_i(s)\xi_k(s)ds\right)e_k(q)dB_i^H(q)-\frac{1}{2}\sum_{k=1}^K\left(\int_r^t\sigma_i(s)\xi_k(s)ds\right)^2\bigg\}\nonumber\\
	&=\exp\bigg\{\int_0^T\sum_{k=1}^K \langle \chi_{[r,t]}\sigma_i,e_k\rangle_{\phi}e_k(q)dB_i^H(q)-\frac{1}{2}\sum_{k=1}^K\langle \chi_{[r,t]}\sigma_i,e_k\rangle_{\phi}^2\bigg\}\nonumber\\
	&=\exp\bigg\{\int_0^T\sigma_i^K(r,t;q)dB_i^H(q)-\frac{1}{2}|\sigma_i^K(r,t;\cdot)|_{\phi}^2\bigg\}=:\mathcal E^K_i(r,t),
\end{align}
and using assumption \ref{assumptions} we have that
\begin{align*}
|X_i^K(t)|\leq |c_i|\exp\bigg\{\int_0^T\sigma_i^K(0,t;q)dB_i^H(q)\bigg\}+M\int_0^t\exp\bigg\{\int_0^T\sigma_i^K(s,t;q)dB_i^H(q)\bigg\}ds,
\end{align*}
where $M>0$ is a constant such that $|b(t,\mathbf{x})|\leq M$.
Taking the $\mathbb L^p(\Omega)$-norm on both sides above and using the triangular inequality we obtain
\begin{align*}
	\|X_i^K(t)\|_p&\leq |c_i|\left\|\exp\bigg\{\int_0^T\sigma_i^K(0,t;q)dB_i^H(q)\bigg\}\right\|_p+M\int_0^t\left\|\exp\bigg\{\int_0^T\sigma_i^K(s,t;q)dB_i^H(q)\bigg\}\right\|_pds\\
	&\leq |c_i|\exp\bigg\{\frac p 2|\sigma_i^K(0,t;\cdot)|^2_{\phi}\bigg\}+M\int_0^t\exp\bigg\{\frac p 2|\sigma_i^K(s,t;\cdot)|^2_{\phi}\bigg\}ds\\
	&\leq |c_i|\exp\bigg\{\frac p 2|\sigma_i^K(0,t;\cdot)|^2_{\phi}\bigg\}+Mt\sup_{s\in[0,t]}\exp\bigg\{\frac p 2|\sigma_i^K(s,t;\cdot)|^2_{\phi}\bigg\}<\infty,
\end{align*}
where we used the fact that the fractional Wiener integral of $\sigma_i^K(r,t;\cdot)$ is a Gaussian random variable.
This proves the membership of $X_i^K(t)$ to $\mathbb L^p(\Omega), p\geq 1$ for all $i\in\{1,2,...,d\}$ and $t\in [0,T]$

Last thing we need to do is to prove that the process $X^{K}(t)$ satisfies the representation \eqref{eq:solution}. First we notice that for any $l\in\{1,...,K\}$ and $i\in\{1,...,d\}$
\begin{align*}
	Z_{l}^{(i)}-\Sigma_{i,l}(s,t)=Z_{l}^{(i)}-\sum_{k=1}^{K}\langle\chi_{[s,t]}\sigma_i,e_k\rangle_{\phi}\int_0^T\int_0^Te_k(r)e_l(q)\phi(r,q)drdq,=\mathtt{T}_{-\Phi[\sigma_i^K(s,t)]} Z_{l}^{(i)} 
\end{align*}

where $=\mathtt{T}_{-\Phi[\sigma_i^K(s,t)]}$ is a shorthand for $=\mathtt{T}_{-\Phi[\sigma_i^K(s,t)]_i}$.
Then it follows from \eqref{eq:mild uu} and \eqref{eq:wick exp WZ} that 
\begin{align*}
X_i^K(t)=c_i\mathcal E^K_i(0,t)+\int_0^t \mathtt T_{-\Phi[\sigma_i^{K}(s,t)]}b_i\left(s,X^K(s)\right)\cdot\mathcal E^K_i(s,t)\;ds.\\
\end{align*}
Using identity \eqref{eq:gjessing} we have that for all $t\in [0,T]$ and $i\in\{1,...,d\}$ the following holds a.s.
\begin{align*}
	X^K_i(t)&=c_i\mathcal E^K_i(0,t)+\int_0^t b_i\big(s,X^K(s)\big)\diamond\mathcal E^K_i(s,t)\;ds;
\end{align*}
completing the proof.

\section{Proof of theorem \ref{th: convergence}}

The aim of this section is to show that the \textit{mild solution} of 
\begin{align*}
	\begin{cases}
		\frac{d X^K_i(t)}{dt}=b_i(t,X^K(t))+\sigma_i(t)X_i^K(t)\diamond\frac{d B_i^K(t)}{dt},\; t\in ]0,T]\\
		X_i(0)=c_i\in\mathbb R,\quad \text{for}\; i\in\{1,...,d\};
	\end{cases}
\end{align*}
 converges in $\mathbb L^1(\Omega)$ to the unique strong solution of
\begin{align*}
	\begin{cases}
		dX_i(t)=b_i(t,X(t))dt+\sigma_i(t)X_i(t)dB_i(t),\; t\in [0,T]\\
		X_i(0)=c_i\in\mathbb R,\quad \text{for}\; i\in\{1,...,d\}.
	\end{cases}
\end{align*}
The first thing we must do is to rewrite the solution of the latter stochastic Cauchy problem in a way that resembles identity \eqref{eq:solution}. To that aim we will mimic a reduction method proposed by \cite{hu1996wick} which is based on techniques from the Wick calculus and fractional White noise (see also \cite[theorem 3.6.1]{holden1996stochastic}). We stress the fact that the following steps are somewhat \textit{formal} in our setting but can be performed rigorously under some more technical considerations (see for instance \cite{hu2003fractional} or \cite{biagini2008stochastic}).

Let 
\begin{align*}
\mathtt E_i(0,t):=\exp\bigg\{-\int_0^t\sigma_i(s)dB_i(s)-\frac 1 2|\chi_{[0,t]}\sigma_i|_{\phi}^2\bigg\},
\end{align*}
and 
\begin{align*}
	\mathcal E_i(0,t):=\exp\bigg\{\int_0^t\sigma_i(s)dB_i(s)-\frac 1 2|\chi_{[0,t]}\sigma_i|_{\phi}^2\bigg\}.
\end{align*}

Using equation ($3.41$) of  we can formally write \eqref{eq:SDE} as 
\begin{align*}
\begin{cases}
	\frac {dX_i(t)}{dt}=b_i(t,X(t))+\sigma_i(t)X_i(t)\diamond \frac{d B_i(t)}{dt},\; t\in [0,T]\\
	X_i(0)=c_i,\;\text{for }i\in\{1,...,d\},
\end{cases}
\end{align*}
were we must bare in mind that the time derivative of the fBm is not well defined as a random variable, so in order to make sense of the expression above we must interpret it as a differential equation in some space of generalized random variables (e.g. \cite{biagini2008stochastic}\cite{hu2003fractional}).
Then we can Wick-multiply both sides of the equality above by $\mathtt E_i(0,t)$ which gives, after rearranging
\begin{align*}
	\frac {dX_i(t)}{dt}\diamond \mathtt E_i(0,t) -\sigma_i(t)X_i(t)\diamond \frac{d B_i(t)}{dt}\diamond \mathtt E_i(0,t)=b_i(t,X(t))\diamond \mathtt E_i(0,t).
\end{align*}
By means of the identity
\begin{align*}
	\frac{d \mathtt E_i(0,t)}{dt}=\sigma_i(t)\mathtt E_i(0,t)\diamond \frac{d B_i(t)}{dt}.
\end{align*}  
and the Leibniz rule for the  \textit{Wick product} we obtain
\begin{align}
\frac{d \mathcal X_i(t)}{dt}=b_i(t, X(t))\diamond \mathtt E_i(0,t),
\end{align}
where 
\begin{align*}
\mathcal X_i(t):=X_i(t)\diamond\mathtt E_i(0,t).
\end{align*}
It follows that 
\begin{align*}
\mathcal X_i(t)=X_i(t)\diamond\mathtt E_i(0,t)=c_i+\int_0^t b_i(s,X(s))\diamond \mathtt E_i(0,s)ds
\end{align*}
or which is equivalent
\begin{align*}
	X_i(t)=c_i\mathcal E_i(0,t)+\int_0^t b_i(s,X(s))\diamond \mathcal E_i(s,t)ds
\end{align*}
where we used the identity
\begin{align*}
\mathcal E_i(0,t)\diamond \mathtt E_i(0,t)=1,\; a.s.\; \text{for all } t\in [0,T].
\end{align*}

We conclude that the unique strong solution of \eqref{eq:SDE} satisfies the following integral equation
\begin{align}\label{eq:strong solution}
	X_i(t)=c_i\mathcal E_i(0,t)+\int_0^t b_i(s,X(s))\diamond \mathcal E_i(s,t)ds
\end{align}
for all $t\in [0,T]$ and $i\in\{1,...,d\}$.

\begin{remark}\label{remark}
Under assumption \ref{assumptions} it follows that for any $t\in[0,T]$ the strong solution of \eqref{eq:SDE}  belongs to $\mathbb L^p(\Omega)$ for any $p\geq 1$.
\end{remark}

Now we are ready to prove the convergence;
\begin{align*}
|X_i(t)-X_i^K(t)|&\leq |c_i||\mathcal E_i(0,t)-\mathcal E_i^K(0,t)|\\
&+\int_0^t|b_i(s,X(s))\diamond \mathcal E_i(s,t)-b_i(s,X^K(s))\diamond \mathcal E_i^K(s,t)|ds\\
&=|c_i||\mathcal E_i(0,t)-\mathcal E_i^K(0,t)|\\
&+\int_0^t\big|b_i(s,X(s))\diamond \mathcal E_i(s,t)-b_i(s,X(s))\diamond \mathcal E_i^K(s,t)\\
&+b_i(s,X(s))\diamond \mathcal E_i^K(s,t)-b_i(s,X^K(s))\diamond \mathcal E_i^K(s,t)\big|ds\\
&\leq|c_i||\mathcal E_i(0,t)-\mathcal E_i^K(0,t)|\\
&+\int_0^t\left|b_i(s,X(s))\diamond \left(\mathcal E_i(s,t)- \mathcal E_i^K(s,t)\right)\right|ds\\
&+\int_0^t\left|b_i(s,X(s))-b_i(s,X^K(s))\right|\diamond \mathcal E_i^K(s,t)ds.\\
\end{align*}
Using the Lipschitz continuity of $b_i$ and the fact that the \textit{Wick product} with a \textit{stochastic exponential} preserves the monotonicity we have that
\begin{align*}
|X_i(t)-X_i^K(t)|&\leq |c_i||\mathcal E_i(0,t)-\mathcal E_i^K(0,t)|+\int_0^t\left|b_i(s,X(s))\diamond \left(\mathcal E_i(s,t)- \mathcal E_i^K(s,t)\right)\right|ds\\
&+L \int_0^t \sum_{j=1}^{d} |X_j(s)-X_j^K(s)|\diamond \mathcal E_i^K(s,t)ds\\
\end{align*}
where $L$ is a positive constant such that for all $t\in [0,T]$ it holds $|b_i(t,X)-b_i(t,Y)|\leq L|X-Y|_1$; here $|\cdot|_1$ denotes the $\ell^1$ norm.
Now we take expectation yielding
\begin{align*}
	\mathbb E \left[|X_i(t)-X_i^K(t)|\right]&\leq |c_i|\mathbb E \left[|\mathcal E_i(0,t)-\mathcal E_i^K(0,t)|\right]+\int_0^t\mathbb E \left[\left|b_i(s,X(s))\diamond \left(\mathcal E_i(s,t)- \mathcal E_i^K(s,t)\right)\right|\right]ds\\
	&+L \int_0^t \sum_{j=1}^{d} \mathbb E\left[|X_j(s)-X_j^K(s)|\right]ds.
\end{align*}
The previous inequality is valid for all $i = 1, . . . , d$ and $t\in [0,T]$; therefore, summing over $i$ and setting
\begin{align*}
\mathtt X^K(t):=\sum_{i=1}^{d} \mathbb E\left[|X_j(t)-X_j^K(t)|\right]
\end{align*}
we obtain
\begin{align*}
\mathtt X^K(t)&\leq\sum_{i=1}^{d}  |c_i|\mathbb E \left[|\mathcal E_i(0,t)-\mathcal E_i^K(0,t)|\right]+\sum_{i=1}^{d} \int_0^t\mathbb E \left[\left|b_i(s,X(s))\diamond \left(\mathcal E_i(s,t)- \mathcal E_i^K(s,t)\right)\right|\right]ds\\
&+Ld\int_0^t \mathtt X^K(s)ds\\
&=\mathcal M^K(t)+Ld\int_0^t \mathtt X^K(s)ds,
\end{align*}
where 
\begin{align*}
\mathcal M^K(t):=\sum_{i=1}^{d}  |c_i|\mathbb E \left[|\mathcal E_i(0,t)-\mathcal E_i^K(0,t)|\right]+\sum_{i=1}^{d} \int_0^t\mathbb E \left[\left|b_i(s,X(s))\diamond \left(\mathcal E_i(s,t)- \mathcal E_i^K(s,t)\right)\right|\right]ds.
\end{align*}
According to Gronwall's inequality  the previous estimate yields
\begin{align}\label{eq:gronwall}
\mathtt X^K(t)\leq \mathcal M^K(t)+Ld\int_0^t\mathcal M^K(s)e^{Ld(t-s)}ds;
\end{align}
and hence the proof will be complete if we show that $\mathcal M^K(t)$ is bounded for all $t\in [0,T]$ and it holds that 
\begin{align*}
\lim_{K\to\infty} \mathcal M^K(t)=0,\quad \text{for all}\; t\in [0,T];
\end{align*}
this will allow us to use dominated convergence for the Lebesgue integral appearing in \eqref{eq:gronwall} and conclude that 
\begin{align*}
\lim_{K\to\infty}\mathtt X^K(t)=0.
\end{align*}

In order to prove the boundedness we write
\begin{align*}
\mathcal M^K(t)&\leq \sum_{i=1}^d |c_i|\left(\mathbb E \left[|\mathcal E_i(0,t)\right]+\mathbb E \left[\mathcal E_i^K(0,t)|\right]\right)\\
&+\int_0^t\mathbb E \left[\left|b_i(s,X(s))\diamond \mathcal E_i(s,t)- b_i(s,X(s))\diamond\mathcal E_i^K(s,t)\right|\right]ds\\
&\leq 2\sum_{i=1}^d |c_i|+\sum_{i=1}^d\int_0^t\mathbb E \left[\left|b_i(s,X(s))\diamond \mathcal E_i(s,t)\right|\right]+\mathbb E \left[\left|b_i(s,X(s))\diamond \mathcal E_i^K(s,t)\right|\right] ds\\
&\leq  2\sum_{i=1}^d |c_i|+2dMt;
\end{align*}
the boundedness also follows from the continuity of $t\mapsto\mathcal M^K(t)$ and the compactness of $[0,T]$.

Thus it suffices to prove that
\begin{align*}
	\lim_{K\to\infty} \mathcal M^K(t)=0,\quad \text{for all}\; t\in [0,T].
\end{align*}

Using the fact that $\mathcal E_i^K(0,t)$ converges in $\mathbb L^p(\Omega), p\geq 1$ to $\mathcal E_i(0,t)$ (see Appendix A) it follows that
\begin{align*}
	\lim_{K\to\infty}\mathcal{M}^{K}(t)=&\lim_{K\to\infty}\sum_{i=1}^d|c_i|\mathbb{E}\left[\left|\mathcal E_i^{K}(0,t)-\mathcal E_i(0,t)\right|\right]\\
	&+\lim_{K\to\infty}\sum_{i=1}^d\int_0^t\mathbb{E}\left[\left|b_i(s,X(s))\diamond\left(\mathcal E_i^{K}(s,t)-\mathcal E_i(s,t)\right)\right|\right]ds\\
	=&\sum_{i=1}^d\lim_{K\to\infty}\int_0^t\mathbb{E}\left[\left|b_i(s,X(s))\diamond\left(\mathcal E_i^{K}(s,t)-\mathcal E_i(s,t)\right)\right|\right]ds.
\end{align*}

We now prove that we can take the last limit inside the integral; first of all, note that the integrand is bounded: in fact,
\begin{align*}
	\mathbb{E}\left[\left|b_i(s,X(s))\diamond\left(\mathcal E_i^{K}(s,t)-\mathcal E_i(s,t)\right)\right|\right]=&\mathbb{E}\left[\left|b_i(s,X(s))\diamond\mathcal E_i^{K}(s,t)-b_i(s,X(s))\diamond\mathcal E_i(s,t)\right|\right]\\
	\leq&\mathbb{E}\left[\left|b_i(s,X(s))\diamond\mathcal E_i^{K}(s,t)\right|\right]+\mathbb{E}\left[\left|b_i(s,X(s))\diamond\mathcal E_i(s,t)\right|\right]\\
	\leq&\mathbb{E}\left[|b_i(s,X(s))|\diamond\mathcal E_i^{K}(s,t)\right]+\mathbb{E}\left[|b_i(s,X(s))|\diamond\mathcal E_i(s,t)\right]\\
	=&\mathbb{E}\left[|b_i(s,X(s))|\right]+\mathbb{E}\left[|b_i(s,X(s))|\right]\\
	\leq&2M.
\end{align*}
We proceed by proving that 
\begin{align*}
	\lim_{K\to\infty}\mathbb{E}\left[\left|b_i(s,X(s))\diamond\left(\mathcal E_i^{K}(s,t)-\mathcal E_i(s,t)\right)\right|\right]=0.
\end{align*} 
Let us rewrite the expected value as follows:
\begin{align*}
	&\mathbb{E}\left[\left|b_i(s,X(s))\diamond\left(\mathcal E_i^{K}(s,t)-\mathcal E_i(s,t)\right)\right|\right]=\mathbb{E}\left[\left|b_i(s,X(s))\diamond\mathcal E_i^{K}(s,t)-b_i(s,X(s))\diamond\mathcal E_i(s,t)\right|\right]
\end{align*}
Using \eqref{eq:gjessing} we get rid of the \textit{Wick product} and write
\begin{align*}
&\quad=\mathbb{E}\left[\left|\mathtt{T}_{-\Phi[\sigma^K(s,t)]}b_i(s,X(s))\mathcal E_i^{K}(s,t)-\mathtt{T}_{-\Phi[\sigma_i(s,t)]}b_i(s,X(s))\mathcal E_i(s,t)\right|\right].
\end{align*}
Adding and subtracting $\mathtt{T}_{-\Phi[\sigma_i(s,t)]}b_i(s,X(s))\mathcal E_i^{K}(s,t)$ inside the absolute value and then using the triangular inequality yields
\begin{align*}
&\quad\leq\mathbb{E}\left[\left|\mathtt{T}_{-\Phi[\sigma^K(s,t)]}b_i(s,X(s))\mathcal E_i^{K}(s,t)-\mathtt{T}_{-\Phi[\sigma_i(s,t)]}b_i(s,X(s))\mathcal E_i^{K}(s,t)\right|\right]\\
&\quad\quad+\mathbb{E}\left[\left|\mathtt{T}_{-\Phi[\sigma_i(s,t)]}b_i(s,X(s))\mathcal E_i^{K}(s,t)
-\mathtt{T}_{-\Phi[\sigma_i(s,t)]}b_i(s,X(s))\mathcal E_i(s,t)\right|\right]\\
&\quad=\mathbb{E}\left[|b_i(s,\mathtt{T}_{-\Phi[\sigma_i^K(s,t)]}X(s))-b_i(s,\mathtt{T}_{-\Phi[\sigma_i(s,t)]}X(s))|\mathcal E_i^{K}(s,t)\right]\\
&\quad\quad+\mathbb{E}\left[|b_i(s,\mathtt{T}_{-\Phi[\sigma_i(s,t)]}X(s))||\mathcal E_i^{K}(s,t)
-\mathcal E_i(s,t)|\right]\\
&\quad\leq L\mathbb{E}\left[|\mathtt{T}_{-\Phi[\sigma_i^K(s,t)]}X(s)-\mathtt{T}_{-\Phi[\sigma_i(s,t)]}X(s)|\mathcal E_i^{K}(s,t)\right]\\
&\quad\quad+M\mathbb{E}\left[|\mathcal E_i^{K}(s,t)
-\mathcal E_i(s,t)|\right].
\end{align*} 
Hence,
\begin{align*}
	\lim_{K\to\infty}\mathbb{E}\left[\left|b_i(s,X(s))\diamond\left(\mathcal E_i^{K}(s,t)-\mathcal E_i(s,t)\right)\right|\right]&\leq\lim_{K\to\infty}L\mathbb{E}\left[|\mathtt{T}_{-\Phi[\sigma_i^K(s,t)]}X(s)-\mathtt{T}_{-\Phi[\sigma_i(s,t)]}X(s)|\mathcal E_i^{K}(s,t)\right]\\
	&+\lim_{K\to\infty}M\mathbb{E}\left[|\mathcal E_i^{K}(s,t)
	-\mathcal E_i(s,t)|\right].
\end{align*} 
The second term above converges to zero by the discussion on Appendix A.

At this point  we will need the following lemma;

\begin{lemma}\label{lemma}
Let $Y\in\mathbb L^q(\Omega)$ for some $q\in (0,\infty)$ and let $\{f_n\}_{n\geq 1}$ be a sequence converging to $f$ in $\mathcal H_{\phi}$, then it holds that 
\begin{align*}
\lim_{n\to\infty} \mathtt T_{\Phi f_n}Y=\mathtt T_{\Phi f}Y, \quad\mbox{ in $\mathbb{L}^p(\Omega)$ for all $0<p<q<\infty$}
\end{align*}
\end{lemma}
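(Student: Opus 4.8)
The plan is to prove the convergence in three steps: a mapping estimate $\mathtt T_{\Phi f}\colon\mathbb L^q\to\mathbb L^p$ with explicit control in $|f|_\phi$, then the special case of a bounded continuous functional, and finally a density reduction. Throughout, fix a coordinate $i$ (which, as in the statement, we suppress): recall from \eqref{eq:gjessing} that $\mathtt T_{\Phi f}$ acts by precomposition with the shift $\omega\mapsto\omega\mp\epsilon_i\int_0^{\cdot}\Phi[f](r)\,dr$, and note that the argument below is insensitive both to the sign of this shift and to the choice of $i$.

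\textbf{Step 1 (an $\mathbb L^q\to\mathbb L^p$ estimate).} Set $h_f(t):=\int_0^t\Phi[f](v)\,dv$; by Fubini and the symmetry of $\phi$ one checks that $h_f(t)=\langle f,\chi_{[0,t]}\rangle_{\phi}$. By the Cameron--Martin theorem for fractional Brownian motion (see e.g. \cite{biagini2008stochastic}; alternatively, with a little care, this can be read off from \eqref{eq:gjessing} applied to $|Z|^p$), the image of $P^H$ under $\omega\mapsto\omega\mp\epsilon_i h_f$ is equivalent to $P^H$, with Radon--Nikodym density the stochastic exponential $\mathcal E_i(\mp f)$. In particular $\mathtt T_{\Phi f}$ is well defined on $\mathbb L^q$-equivalence classes, and for every $Z\in\mathbb L^q(\Omega)$ and every $0<p<q$,
\[
\mathbb E\big[|\mathtt T_{\Phi f}Z|^p\big]=\mathbb E\big[|Z|^p\,\mathcal E_i(\mp f)\big]\le\|Z\|_q^{\,p}\exp\Big\{\tfrac{p}{2(q-p)}\,|f|_{\phi}^2\Big\},
\]
the inequality being Hölder with exponents $q/p$ and $q/(q-p)$ together with $\mathbb E\big[\mathcal E_i(\mp f)^{r}\big]=\exp\{\tfrac12 r(r-1)|f|_{\phi}^2\}$, which is finite since $\int_0^T f\,dB_i\sim N(0,|f|_{\phi}^2)$ by \eqref{eq:isometry}. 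This is exactly where $p<q$ enters, and the restriction is genuinely needed, since $\mathtt T_{\Phi f}$ does not map $\mathbb L^q$ into itself. As $\{f_n\}$ converges in $\mathcal H_{\phi}$, the number $R:=|f|_{\phi}\vee\sup_n|f_n|_{\phi}$ is finite, so the exponential factor above is bounded by a constant $C_R$ uniformly over $f$ and over all the $f_n$.

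\textbf{Step 2 (bounded continuous functionals and conclusion).} Let $G\colon\Omega\to\mathbb R$ be bounded and continuous. From $h_g=\langle g,\chi_{[0,\cdot]}\rangle_{\phi}$ we obtain $\|h_{f_n}-h_f\|_{C([0,T])}\le|f_n-f|_{\phi}\sup_{t\in[0,T]}|\chi_{[0,t]}|_{\phi}=T^{H}|f_n-f|_{\phi}\to0$, hence $\omega\mp\epsilon_i h_{f_n}\to\omega\mp\epsilon_i h_f$ in $\Omega$ for every $\omega$; continuity of $G$ then gives $\mathtt T_{\Phi f_n}G\to\mathtt T_{\Phi f}G$ pointwise on $\Omega$, and since $|\mathtt T_{\Phi f_n}G|\le\|G\|_{\infty}$ everywhere, dominated convergence yields $\mathtt T_{\Phi f_n}G\to\mathtt T_{\Phi f}G$ in $\mathbb L^p(\Omega)$. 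For general $Y\in\mathbb L^q(\Omega)$ and $\varepsilon>0$, pick such a $G$ with $\|Y-G\|_q<\varepsilon$ (bounded continuous functionals are dense in $\mathbb L^q(\Omega)$ for every $q>0$); by linearity of $\mathtt T$ and the quasi-triangle inequality in $\mathbb L^p$ there is $c_p$ depending only on $p$ with
\[
\mathbb E\big[|\mathtt T_{\Phi f_n}Y-\mathtt T_{\Phi f}Y|^p\big]\le c_p\Big(\mathbb E\big[|\mathtt T_{\Phi f_n}(Y-G)|^p\big]+\mathbb E\big[|\mathtt T_{\Phi f_n}G-\mathtt T_{\Phi f}G|^p\big]+\mathbb E\big[|\mathtt T_{\Phi f}(Y-G)|^p\big]\Big).
\]
By Step 1 the first and third terms are at most $C_R\varepsilon^{p}$ uniformly in $n$, and by the first part of Step 2 the middle term tends to $0$. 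Hence $\limsup_n\mathbb E\big[|\mathtt T_{\Phi f_n}Y-\mathtt T_{\Phi f}Y|^p\big]\le 2c_pC_R\varepsilon^{p}$ for all $\varepsilon>0$, which proves the lemma.

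I expect Step 1 to be the main obstacle: one must pin down the Cameron--Martin change of variables and, crucially, exploit the strict inequality $p<q$, because $\mathtt T_{\Phi f}$ is \emph{not} bounded on $\mathbb L^q$ and it is precisely the Hölder room created by $p<q$ that lets the $\mathbb L^q$-approximation error from Step 2 be absorbed. The remaining ingredients — the shift estimate $\|h_{f_n}-h_f\|_{\infty}\to0$, the pointwise-convergence-plus-domination argument for $G$, and the density reduction — are routine.
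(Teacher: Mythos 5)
Your proof is correct, and its skeleton coincides with the paper's: a uniform bound on the translation operator obtained from the fractional Girsanov theorem plus H\"older's inequality (this is exactly where $p<q$ is consumed, in both arguments), convergence on a dense subclass of $\mathbb L^q(\Omega)$, and a three-term splitting whose outer terms are absorbed uniformly in $n$. The genuine difference is the choice of dense class and how convergence is proved on it. The paper approximates $Y$ by polynomials in elements of the Gaussian Hilbert space $\mathcal G_{\phi}$, computes explicitly that the translation acts on a Wiener integral by adding the deterministic constant $\langle f_n,\chi_{[0,t]}\rangle_{\phi}$, and then extends to polynomials; the step from single Gaussian integrals to $\mathbb L^p$-convergence of the translated polynomials is asserted rather than argued (it needs a uniform-integrability or moment bound on $\mathtt T_{\Phi f_n}P$, which the paper does not spell out). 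You instead take bounded continuous functionals on $\Omega=C_0([0,T],\mathbb R^d)$, show that the shifted paths converge uniformly via $\sup_t|\langle f_n-f,\chi_{[0,t]}\rangle_{\phi}|\le T^{H}|f_n-f|_{\phi}$, and conclude by dominated convergence; this makes the dense-class step entirely elementary and self-contained, at the cost of invoking density of bounded continuous functionals in $\mathbb L^q$ (standard on a Polish space, including for $0<q<1$ via Lusin's theorem). Your Step 1 also packages the paper's per-term H\"older estimates into a single clean operator bound $\mathbb E[|\mathtt T_{\Phi f}Z|^p]\le\|Z\|_q^{p}\exp\{\tfrac{p}{2(q-p)}|f|_{\phi}^2\}$, with the same uniform control coming from $\sup_n|f_n|_{\phi}<\infty$. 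Both routes are valid; yours closes a small gap the paper leaves open on the dense class, while the paper's polynomial route stays closer to the Wick-calculus machinery used elsewhere in the article.
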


\begin{proof}
For simplicity we will consider the case of random variable $Y$ depending only on a one dimensional fBm that can be seen as one of the components of our $d$-dimensional fBm, the general case does not present further difficulties.
Notice that 
\begin{align*}
	\mathtt{T}_{\Phi f_n} B_t(\omega)&=B_t^H+\int_0^t\int_0^Tf_n(s)\phi(s,r)dsdr\\
	&=B_t+\int_0^T\int_0^T\chi_{[0,t]}(r)f_n(s)\phi(s,r)dsdr\\
	&=B_t+\langle f_n,\chi_{[0,t]}\rangle_{\phi},
\end{align*}
at this point we use the fact that convergence in norm implies the weak convergence, and hence if $f_n$ converges in $\mathcal H_{\phi}$ to $f$ as $n\to\infty$ we have that $\langle f_n,\chi_{[0,t]}\rangle_{\phi}$ converges to $\langle f,\chi_{[0,t]}\rangle_{\phi}$.

This implies that 
\begin{align*}
	\mathbb E[|\mathtt{T}_{\Phi f} B_t-\mathtt{T}_{\Phi f} B_t|^p]=|\langle f_n,\chi_{[0,t]}\rangle_{\phi}-\langle f,\chi_{[0,t]}\rangle_{\phi}|^p\to 0,\; \text{as } n\to\infty.
\end{align*}
Furthermore notice that this holds for any random variable in the \textit{Gaussian Hilbert space } (e.g. \cite{janson1997gaussian})
\begin{align*}
	\mathcal G_{\phi}:=\bigg\{\int_0^Tg(s)dB_s;\; g\in\mathcal H_{\phi}\bigg\}.
\end{align*}
At this point if $Y\in\mathbb L^q(\Omega)$ we have that for any $\epsilon>0$ there's a polynomial random variable $P$ (which is a polynomial in some random variables in $\mathcal G_{\phi}$) such that $\|Y-P\|_q<\epsilon$ (the existence of such a random variable is guaranteed by \cite[Theorem 3.2]{hu2003fractional} together with  \cite[Theorem 2.11]{janson1997gaussian}).
By the triangle inequality we have that for $0<p<q$
\begin{align*}
	\|\mathtt{T}_{\Phi f_n}X-\mathtt{T}_{\Phi f}X\|_p&\leq \|\mathtt{T}_{\Phi f_n}X-\mathtt{T}_{\Phi f_n}P\|_p +\|\mathtt{T}_{\Phi f_n}P-\mathtt{T}_{\Phi f}P\|_p+\|\mathtt{T}_{\Phi f}P-\mathtt{T}_{\Phi f}Y\|_p\\
\end{align*}
Now using the fractional Girsanov's theorem we have
\begin{align*}
\|\mathtt{T}_{\Phi f}P-\mathtt{T}_{\Phi f}Y\|_p&=\mathbb{E}\left[|P-Y|^p\mathcal E(f)\right]^{1/p}\\
&\leq \mathbb E\left[|P-Y|^{pp_1}\right]^{1/(pp_1)}\mathbb{E}\left[\mathcal{E}(f)^{p_2/p}\right]^{1/(pp_2)}\\
&=\|P-Y\|_q\|\|\mathcal{E}(f)^{1/p^2}\|_r
\end{align*}
where $q:=p_1p$, $r:=p_2p$ and $\frac{1}{p_1}+\frac{1}{p_2}=1$. Same happens with $\|\mathtt{T}_{\Phi f_n}P-\mathtt{T}_{\Phi f_n}Y\|_p$.
At this point we notice that
\begin{align*}
\|\mathcal E(f_n)^{1/p^2}\|_r&\leq \mathbb E\left[\exp\bigg\{r/p^2\int_0^T f_n(s)dB_s^H\bigg\}\right]^{1/r}\\
& \leq  \sup_{n}\exp\bigg\{\frac{r}{2p^4}|f_n|_{\phi}^2\bigg\}=:C.
\end{align*}
It follows that 
\begin{align*}
\|\mathtt{T}_{\Phi f_n}X-\mathtt{T}_{\Phi f}X\|_p	&\leq 2C \|P-Y\|_q+\|\mathtt{T}_{f_n}P-\mathtt{T}_{\Phi f}P\|_p\\
&\leq (2C+1)\epsilon
\end{align*}
provided $n$ is large enough; since $\epsilon$ was arbitrary the proof is complete.
\end{proof}
From remark \ref{remark}, lemma \ref{lemma}  and the fact that 
\begin{align*}
	\lim_{K\to\infty}\mathcal E_i^{K}(s,t)=\mathcal E_i(s,t),\quad\mbox{ in $\mathbb{L}^p(\Omega)$ for all $p\geq 1$} 
\end{align*}
it follows that 
\begin{align*}
	\lim_{K\to\infty}L\mathbb{E}\left[|\mathtt{T}_{-\Phi\sigma_i^K(s,t)}X(s)-\mathtt{T}_{-\Phi\sigma_i(s,t)}X(s)|\mathcal E_i^{K}(s,t)\right]=0,
\end{align*}
completing the proof.

\section{Proof theorem \ref{th:fokker planck}}
Let $\varphi\in C_0^2([0,T]\times\mathbb R^d)$, i.e. a two times continuously differentiable function on $[0,T]\times\mathbb R^d$ with compact support, and in order to ease the notation we set $\mathbf{z}:=\mathbf{x}\rvert_{{x}_{ki}=Z_k^{(i)}}$.

Then by \ref{eq:solution def} we have
\begin{align*}
0&=\varphi(T,X^K(T))-\varphi(0,c)\\
&=\int_0^T\left[\partial_t\varphi(r,u(r,\mathbf{z}))+\sum_{i=1}^{d}\partial_i\varphi(r,u(r,\mathbf{z}))\partial_tu_i(r,\mathbf{z})\right]dr\\
&=\int_0^T \partial_t\varphi(r,u(r,\mathbf{z}))dr\\
&+\sum_{i=1}^{d}\int_0^T\partial_i\varphi(r,u(r,\mathbf{z}))\left(b_i(t,u(r,\mathbf{z}))+\sigma_i(t)\sum_{k=1}^K\xi_k(t)\left[x_{ki}u_i(r,\mathbf{z})-\partial_{x_{ki}}u_i(r,\mathbf{z})\right]\right)dr\\
&=\mathcal A+\mathcal B+\mathcal C+\mathcal D,
\end{align*}
where
\begin{align*}
\mathcal A&=\int_0^T \partial_t\varphi(r,u(r,\mathbf{z}))dr,\\
\mathcal B&=\int_0^T\nabla\varphi(r,u(r,\mathbf{z}))\bullet b(s,u(r,\mathbf{z})) dr,\\
\mathcal C&=\sum_{i=1}^{d}\sum_{k=1}^K\int_0^T\partial_i\varphi(r,u(r,\mathbf{z}))\sigma_i(r)\xi_k(r)[x_{ki}u_i(r,\mathbf{z})]dr,\\
\mathcal D&=-\sum_{i=1}^{d}\sum_{k=1}^K\int_0^T\partial_i\varphi(r,u(r,\mathbf{z}))\sigma_i(r)\xi_k(r)\partial_{ik}u_i(r,\mathbf{z})dr,
\end{align*}
where $\bullet$ denotes the inner product in $\mathbb R^d$.
Taking expectation to the first and last term above we obtain
\begin{align*}
0=\mathbb{E}[\mathcal A]+\mathbb{E}[\mathcal B]+\mathbb{E}[\mathcal C]+\mathbb{E}[\mathcal D].
\end{align*}
Now using the fact that $\mathbf{z}$ is a standard Gaussian matrix where the components are mutually independent, we have 
\begin{align*}
\mathbb E[\mathcal C]&=\sum_{i=1}^{d}\sum_{k=1}^K\int_0^T\sigma_i(r)\xi_k(r)\int_{\mathbb R^{K\times d}}\partial_i\varphi(r,u(r,\mathbf{x}))u_i(r,\mathbf{x})x_{ki} (2 \pi)^{-K\times d/2}e^{-\frac 1 2\|\mathbf{x}\|_F^2}d\mathbf{x}dr,
\end{align*}
integration by parts yields
\begin{align*}
&=\sum_{i=1}^{d}\sum_{k=1}^K\int_0^T\sigma_i(r)\xi_k(r)\int_{\mathbb R^{K\times d}}\partial_i\varphi(r,u(r,\mathbf{x}))u_i(r,\mathbf{x}) (2 \pi)^{-K\times d/2}\partial_{x_{ki}}e^{-\frac 1 2\|\mathbf{x}\|_F^2}d\mathbf{x}dr\\
&=\sum_{i,j=1}^{d}\sum_{k=1}^K\int_0^T\sigma_i(r)\xi_k(r)\int_{\mathbb R^{K\times d}}\partial_j\partial_i\varphi(r,u(r,\mathbf{x}))\partial_{x_{ki}}u_j(r,\mathbf{x})u_i(r,\mathbf{x}) (2 \pi)^{-K\times d/2}\partial_{x_{ki}}e^{-\frac 1 2\|\mathbf{x}\|_F^2}d\mathbf{x}dr\\
&+\sum_{i=1}^{d}\sum_{k=1}^K\int_0^T\sigma_i(r)\xi_k(r)\int_{\mathbb R^{K\times d}}\partial_i\varphi(r,u(r,\mathbf x))\partial_{x_{ki}}u_i(r,\mathbf{x})(2 \pi)^{-K\times d/2}\partial_{x_{ki}}e^{-\frac 1 2\|\mathbf{x}\|_F^2}d\mathbf{x}dr\\
&=\mathbb E\left[\sum_{i,j=1}^{d}\sum_{k=1}^K\int_0^T\sigma_i(r)\xi_k(r)\partial_j\partial_i\varphi(r,u(r,\mathbf{z}))\partial_{x_{ki}}u_j(r,\mathbf{z})u_i(r,\mathbf{z})dr\right]\\
&+\mathbb E\left[\sum_{i=1}^{d}\sum_{k=1}^K\int_0^T \partial_i\varphi(r,u(r,\mathbf z))\sigma_i(r)\xi_k(r)\partial_{x_{ki}}u_i(r,\mathbf{z})dr\right]
\end{align*}
and now notice that the last term above equals $-\mathbb E\left[\mathcal D\right]$.

At this point we have that 
\begin{align*}
0&=\mathbb E[\mathcal A]+\mathbb E[\mathcal B]+\mathbb E\left[\sum_{i,j=1}^{d}\sum_{k=1}^K\int_0^T\sigma_i(r)\xi_k(r)\partial_j\partial_i\varphi(r,u(r,\mathbf{z}))\partial_{x_{ki}}u_j(r,\mathbf{z})u_i(r,\mathbf{z})dr\right].
\end{align*}

Using Tower's property yields
\begin{align*}
0&=\mathbb E[\mathcal A]+\mathbb E[\mathcal B]+\mathbb E\left[\sum_{i,j=1}^{d}\sum_{k=1}^K\int_0^T\sigma_i(r)\xi_k(r)\partial_j\partial_i\varphi(r,u(r,\mathbf{z}))u_i(r,\mathbf{z})\mathbb E[\partial_{x_{ki}}u_j(r,\mathbf{z})|\mathcal G_i(r)]dr\right]\\
&=\mathbb E[\mathcal A]+\mathbb E[\mathcal B]+\mathbb E\left[\sum_{i,j=1}^{d}\sum_{k=1}^K\int_0^T\sigma_i(r)\xi_k(r)\partial_j\partial_i\varphi(r,u(r,\mathbf{z}))u_i(r,\mathbf{z})g_{ki}^{(j)}(r,u_i(r,\mathbf{z}))dr\right]
\end{align*}

where $\mathcal G_i(r)$ is the sigma algebra generated by the random variable $u_i(r,\mathbf{z})$, and the function $g_{ki}^{(j)}:[0,T]\times\mathbb R^{K\times d}$ is a measurable function, whose existence is guaranteed by the Doob's lemma chosen to satisfy
\begin{align}\label{eq:g}
g_{ki}^{(j)}(r,u_i(r,\mathbf{z}))=\mathbb E[\partial_{x_{ki}}u_j(r,\mathbf{z})|\mathcal G_i(r)].
\end{align}

Putting everything together and using (\ref{eq:solution def}) we obtain

\begin{align*}
0&=\mathbb E\bigg[\int_0^T \partial_t\varphi(r,X^K(r))dr+\int_0^T\nabla\varphi(r,X^K(r))\bullet b(s,X^K(r)) dr\\
&+\sum_{i,j=1}^{d}\sum_{k=1}^K\int_0^T\sigma_i(r)\xi_k(r)\partial_j\partial_i\varphi(r,X^K(r))X_i^k(r)g_{ki}^{(j)}(r,X_i^K(r))dr\bigg]
\end{align*}

Observe that the last member above contains expectations of functions of the random vector $X^K(r)$, for $r\in[0,T]$; therefore, writing the law of this random vector as
\begin{align*}
	\mu^{K}(r,A):=\mathbb P(\{\omega\in\Omega:X^K(r,\omega)\in A\}), \quad r\in [0,T], A\in\mathcal B(\mathbb R^d)
\end{align*}
\begin{align*}
0&=\int_0^T\int_{\mathbb R^d}\bigg[\partial_t\varphi(r,x)+\nabla\varphi(r,x)\bullet b(s,x)\\
&+ \sum_{i,j=1}^{d}\sum_{k=1}^K\int_0^T\sigma_i(r)\xi_k(r)\partial_j\partial_i\varphi(r,x)x_ig_{ki}^{(j)}(r,x_i)dr\bigg]d\mu^K(r,x)dr.
\end{align*}
The last equalities hold for any test function $\varphi\in C_0^2([0,T]\times\mathbb R^d)$ and this completes the proof.

\section*{Appendix A}\label{appendix}
Fix $s,t\in [0,T]$ and without loss of generality assume that $t\geq s$, then  using the basic inequality $|e^{X}-e^{Y}|\leq |e^{X}+e^Y|\cdot|X-Y|$ it holds that
\begin{align*}
|\mathcal E_i^K(s,t)-\mathcal E_i(s,t)|&\leq |\mathcal E_i^K(s,t)+\mathcal E_i(s,t)|\\
&\times\left|\int_0^T\sigma_i^K(t,s;q)dB_i(q)-\frac 1 2 |\sigma_i^K(t,s;\cdot)|_{\phi}^2-\int_0^t\sigma_i(s)dB_i(s)+\frac 1 2|\chi_{[0,t]}\sigma_i|_{\phi}^2\right|\\
&\leq |\mathcal E_i^K(s,t)+\mathcal E_i(s,t)|\\
&\times\left(\left|\int_0^T[\sigma_i^K(t,s;q)-\sigma_i(q)]dB_i(q) \right|+\frac 1 2\left||\sigma_i^K(t,s;\cdot)|_{\phi}^2-|\chi_{[s,t]}\sigma_i|_{\phi}^2\right|\right).
\end{align*}

Now let's write
\begin{align*}
\left||\sigma_i^K(t,s;\cdot)|_{\phi}^2-|\chi_{[s,t]}\sigma_i|_{\phi}^2\right|&\leq \left|(|\sigma_i^K(t,s;\cdot)|_{\phi}+|\chi_{[s,t]}\sigma_i|_{\phi})(|\sigma_i^K(t,s;\cdot)|_{\phi}-|\chi_{[s,t]}\sigma_i|_{\phi})\right|\\
&\leq S^2 T^{2H}|\sigma_i^K(t,s)-\chi_{[s,t]}\sigma_i|_{\phi}
\end{align*}
where we used the triangular inequality and $S$ is a constant such that $|\sigma_i(t)|\leq S$ for all $t\in [0,T]$.

At this point we raise both sides to the $p\geq 1$ and take expectation yielding
\begin{align*}
\mathbb E\left[|\mathcal E_i^K(s,t)-\mathcal E_i(s,t)|^p\right]&\leq 2^{p-1}\mathbb E\bigg[|\mathcal E_i^K(s,t)+\mathcal E_i(s,t)|^p\bigg(|I(\sigma_i^K(t,s)-\chi_{[s,t]}\sigma_i)|^p\\
&+S^{2p} T^{2Hp}|\sigma_i^K(t,s)-\chi_{[s,t]}\sigma_i|_{\phi}^p\bigg)\bigg]
\end{align*}
where $I(\cdot)$ denotes the fractional Wiener integral.
Using  Hölder's inequality where $\frac{1}{p_1}+\frac{1}{p_2}=\frac 1 p$ we have
\begin{align*}
&\leq 2^{p-1}\bigg\{\|\mathcal E_i^K(s,t)+\mathcal E_i(s,t)\|_{{p_1}}^{p}\|I(\sigma_i^K(t,s)-\chi_{[s,t]}\sigma_i)\|_{{p_2}}^{p}\\
&+S^{2p} T^{2Hp}\|\mathcal E_i^K(s,t)+\mathcal E_i(s,t)\|_{{p}}^{p}|\sigma_i^K(t,s)-\chi_{[s,t]}\sigma_i|_{\phi}^p\bigg\}\\
&\leq 2^{p-1}\bigg\{\left(2^{p}e^{p(p_1-1)/2|\chi_{[s,t]}\sigma_i|_{\phi}^2}\right)2^{p/2}\Gamma(p_2+1)^{p/p_2}/\sqrt{\pi}|\sigma_i^K(t,s)-\chi_{[s,t]}\sigma_i|_{\phi}^p\\
&+S^{2p} T^{2Hp}\left(2^pe^{p(p-1)/2|\chi_{[s,t]}\sigma_i|_{\phi}^2}\right)|\sigma_i^K(t,s)-\chi_{[s,t]}\sigma_i|_{\phi}^p\bigg\}
\end{align*}

and hence 
\begin{align*}
\mathbb E\left[|\mathcal E_i^K(s,t)-\mathcal E_i(s,t)|^p\right]\leq \mathtt C |\sigma_i^K(t,s)-\chi_{[s,t]}\sigma_i|_{\phi}^p\to 0,
\end{align*}
as $K\to\infty$ where
\begin{align*}
\mathtt C=2^{3p/2-1}e^{p(p_1-1)/2|\chi_{[s,t]}\sigma_i|_{\phi}^2}\Gamma(p_2+1)^{p/p_2}/\sqrt{\pi}+2^{2p-1}S^{2p} T^{2Hp}e^{p(p-1)/2|\chi_{[s,t]}\sigma_i|_{\phi}^2}.
\end{align*}

\bibliographystyle{IEEEtran}
\bibliography{bib}

\end{document}